\keywords{groups, Diophantine problem, context-free}
\let\leq\leqslant
\let\geq\geqslant
\let\epsilon\varepsilon
\let\phi\varphi
\begin{document}

\title{Groups with context-free Diophantine problem}
\author{Vladimir Yankovskiy}
\address{Faculty of Mechanics and Mathematics of Moscow State University, Moscow 119991 Russia, Leninskie Gory, MSU. Moscow Center for Fundamental and Applied Mathematics, Russia}	
\email{vladimir\_yankovskiy@mail.ru} 
\thanks{The work of the author was supported by the Russian Foundation for Basic Research, project no. 19-01-00591, and the Ministry of Education and Science of the Russian Federation as part of the program of the Moscow Center for Fundamental and Applied Mathematics under the agreement 075-15-2019-1621.}	

\begin{abstract}

We find algebraic conditions on a group equivalent to the position
of its Diophantine problem in the Chomsky Hierarchy.
In particular,
we prove that a finitely generated group has a context-free Diophantine problem if and only if it is finite.

MSC 2020: 03D40, 20F10, 20F70

\end{abstract}

\maketitle

\section{Introduction}

The main result of this article is the following:

\begin{thm}\label{thm:main-one}
Suppose that $G$ is a finitely generated group, and $A$ is a finite symmetric generating set for $G$. Then the following statements are equivalent:
\begin{itemize}
\item The $n$-ary Diophantine problem is context-free for some $n \geq 1$;
\item The $n$-ary Diophantine problem is context-free for all $n \geq 1$;
\item $G$ is finite.
\end{itemize}
\end{thm}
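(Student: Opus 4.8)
\emph{Strategy.} The implication ``$G$ finite $\Rightarrow$ $\mathcal D_n(G,A)$ context-free for all $n$'' is the easy one, and in fact I would prove the stronger statement that $\mathcal D_n(G,A)$ is regular. When $G$ is finite, an equation is satisfiable iff one of the $|G|^n$ substitutions of its variables evaluates to $1_G$, and a nondeterministic finite automaton over $A\sqcup\{x_1^{\pm1},\dots,x_n^{\pm1}\}$ can verify this: on its first move it guesses a tuple $(g_1,\dots,g_n)\in G^n$, and thereafter it keeps in its state the partial product in $G$ of the prefix read so far, interpreting each $a\in A$ as $a$ and each $x_k^{\pm1}$ as $g_k^{\pm1}$, accepting exactly when this product is $1_G$ (resetting the running product at the separators, if the Diophantine problem is taken to be about systems). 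Since $G^n\times G$ is finite this is a genuine finite automaton accepting precisely $\mathcal D_n(G,A)$. This, together with the hard direction below, yields the cycle of implications.

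\emph{Context-free $\Rightarrow$ finite; setup.} Suppose $\mathcal D_n(G,A)$ is context-free for some $n\ge1$. Intersecting with the regular set of variable-free words and using that a variable-free equation is satisfiable iff it equals $1$ in $G$, the word problem of $(G,A)$ is context-free, so by the Muller--Schupp theorem $G$ is virtually free. Assume for contradiction that $G$ is infinite. Fix a free normal subgroup $N\trianglelefteq G$ of finite index $M=[G:N]$ (for instance the normal core of a finite-index free subgroup); since $G$ is infinite, $N$ is a non-trivial free group, so it contains an element $g$ that is not a proper power in $N$ (an element of a free basis). Fix a word $w_0\in A^*$ representing $g$; note $w_0\neq\varepsilon$ since $g\neq1$. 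Let $R$ be the language of all words $x_1^iw_0^j$ with $i,j\ge1$ and $i\equiv1\pmod M$; this is regular, as a finite automaton can check the shape $x_1^{+}w_0^{+}$ while counting the $x_1$'s modulo $M$. The equation $x_1^iw_0^j=1$ is satisfiable in $G$ iff $g^j$ has an $i$-th root in $G$, and for $i\equiv1\pmod M$ this happens iff $i\mid j$: indeed if $X^i=g^j$ with $j\ge1$, then since $\gcd(i,M)=\gcd(1,M)=1$ the order of $XN$ in $G/N$ divides $\gcd(i,M)=1$, so $X\in N$; in the free group $N$ the non-trivial element $X^i=g^j$ lies in $\langle g\rangle$, which is a maximal cyclic subgroup of $N$ (because $g$ is not a proper power) and hence isolated in $N$, forcing $X\in\langle g\rangle$ and so $i\mid j$; the converse is clear ($X=g^{j/i}$). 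Consequently $\mathcal D_n(G,A)\cap R=\{\,x_1^iw_0^j : i,j\ge1,\ i\equiv1\pmod M,\ i\mid j\,\}$.

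\emph{The contradiction.} Context-freeness of $\mathcal D_n(G,A)$ makes $\mathcal D_n(G,A)\cap R$ context-free, so by Parikh's theorem its image under the letter-counting map is semilinear; since $w_0\neq\varepsilon$ the map sending $(i,j)$ to the Parikh vector of $x_1^iw_0^j$ is linear and injective, so pulling back, the set $S=\{(i,j)\in\mathbb N^2 : i,j\ge1,\ i\equiv1\pmod M,\ i\mid j\}$ is semilinear. This is impossible. A semilinear subset of $\mathbb N^2$ admits a single $P\ge1$ for which every slice $\{j:(i,j)\in S\}$ is eventually $P$-periodic: it suffices to check this for one linear set with base point $b$ and generators $p_1,\dots,p_r$, where fixing the first coordinate $i$ bounds the multiplicities of the $p_k$ having nonzero first coordinate, while the $p_k$ with zero first coordinate contribute to each slice an eventual period dividing the least common multiple of their second coordinates; take $P$ to be the least common multiple over all generators of all the (finitely many) linear pieces, and use that eventual $P$-periodicity is preserved by finite unions. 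But the slice of $S$ at $i=1+Mk$ is exactly the set of positive multiples of $i$, whose minimal period $i$ is unbounded as $k\to\infty$ --- equivalently, ``$i\mid j$'' with $i$ a variable is not Presburger-definable. This contradiction shows $G$ is finite.

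\emph{Main obstacle.} The delicate step is the analysis of $i$-th roots of $g^j$ in $G$: the descent to a finite-index free normal subgroup together with the restriction to exponents $i$ coprime to the index is what forces every such root into the free subgroup, where isolation of maximal cyclic subgroups pins it down to $g^{j/i}$. This is exactly why the reduction to virtually free groups via Muller--Schupp is indispensable: in an arbitrary infinite finitely generated group $g^j$ can acquire roots of unboundedly large order (for instance, $a$ in $BS(1,2)=\langle a,t\mid tat^{-1}=a^2\rangle$ has a $2^k$-th root for every $k$), so the relevant slices need no longer be divisibility sets and the argument would break down.
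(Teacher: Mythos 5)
Your proof is correct, and in the hard direction it takes a genuinely different route from the paper's. Both arguments share the same skeleton: intersect the Diophantine problem with $A^*$ to get a context-free word problem, invoke Muller--Schupp to conclude $G$ is virtually free, and then, assuming $G$ infinite, intersect with a regular language of the shape (power of a variable)(power of a fixed word) to extract a two-parameter family of equations whose satisfiability set is too irregular to be context-free. The divergence is in how that irregularity is certified. The paper passes to a hyperbolic-group viewpoint, uses Gromov's classification of commensuration classes to find a virtually infinite cyclic subgroup $H$ with a finite-index normal cyclic $\langle c\rangle$, encodes the resulting satisfiability set as a set of rational ratios $n/m$, and kills it with a bespoke lemma (proved via the pumping lemma) that a context-free set of rationals has only finitely many isolated points. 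You instead take a finite-index free normal subgroup $N$, restrict to exponents $i\equiv 1\pmod{[G:N]}$ so that every $i$-th root of $g^{\pm j}$ is forced into $N$, use isolation of maximal cyclic subgroups of free groups to reduce satisfiability to the condition $i\mid j$, and then apply Parikh's theorem: the set $\{(i,j): i\equiv 1\pmod M,\ i\mid j\}$ is not semilinear, as your slice-periodicity argument (equivalently, non-Presburger-definability of divisibility) shows. Your route buys independence from Gromov's theorem and from the paper's Section 4 entirely, replacing them with Nielsen--Schreier plus standard facts about centralizers in free groups and about semilinear sets; the paper's route avoids Parikh's theorem and the pullback-of-semilinearity step (which you justify correctly via injectivity of the Parikh map, since $w_0$ contains no occurrence of $x_1$, together with Presburger definability of semilinear sets). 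Your easy direction (a finite automaton that guesses a tuple in $G^n$ and tracks the running product) matches the paper's, which realizes the same idea deterministically by tracking a function $G^n\to G$.
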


Similar results are proved in Section 3 for groups with regular Diophantine problems (they are exactly all finite groups) and for groups with recursively enumerable Diophantine problems (they are exactly all finitely generated subgroups of finitely presented groups).

The Diophantine problem is a generalization of the classical word problem, first introduced by Dehn in \cite{dehn}. Classification of groups with recursively enumerable word problem was obtained by Higman in \cite{higman}, classification of groups with regular word problem was obtained by Anisimov in \cite{anisimov} and classification of groups with context-free word problem was obtained by Muller and Schupp in \cite{shupp}. Those results will be formulated in Section 2.

Equations over groups have previously been studied from a language-theoretic point of view. Ciobanu, Diekert and Elder proved that the full sets of solutions to systems of equations in a free group form EDT0L languages, and that their specification can be computed in $\mathsf{NSPACE}(n \log n)$ \cite{cde}. This was followed by Diekert,  Je\.z and Kufleitner for right-angled Artin groups \cite{djk}, Diekert and Elder for virtually free groups \cite{diel}, Ciobanu and Elder for hyperbolic groups \cite{ciel}, and Evetts and Levine for virtually abelian groups \cite{evle}; in each case the language of solutions is shown to be EDT0L and specifications can be computed with various space bounds. It was also recently shown by Ferens and Je\.z \cite{feje} that full sets of solutions to systems of unary equations (equations with only one variable) in free groups form regular languages, and that their specification can be computed in cubic time.

However, our approach to studying languages associated with equations differs from this previous work, where the languages consist of tuples of words that represent values that can be assigned to variables in an equation or system of equations which solve those equations. On the contrary, here we consider the language consisting of the actual equations themselves which have solutions.

This article is organised as follows.
Section 2 deals with the definition of polynomials on groups and with language theoretic properties of the word problem.
Section 3 deals with the definition of the Diophantine problem and with the necessary and sufficient conditions for it to be regular and recursively enumerable.
Sections 4 and 5 contain Lemmas used in the proof of Theorem~\ref{thm:main-one}.
In Section 6 we prove Theorem~\ref{thm:main-one}.

\section{Polynomials in groups}

\begin{defi}
Suppose that $G$ is a finitely generated group, $A$ is a finite symmetric generating set for $G$, and $\{x_1, x_1^{-1}, ... , x_n, x_n^{-1}\}$ is a set of \emph{formal variables} and their inverses. An {\em $n$-ary polynomial over $(G, A)$} is a word from $P_n(G, A) := (A \cup \{x_1, x_1^{-1}, ... , x_n, x_n^{-1}\})^*$.
\end{defi}

Now we define an additional structure on the polynomials:

\begin{defi}
Suppose that $G$ is a finitely generated group, and $A$ is a finite symmetric generating set for $G$. Then {\em the interpretation of $n$-ary polynomials over $(G, A)$ } is the function $i: P_n(G, A) \to G^{G^n}$, defined recursively by:
\begin{align*}
i(\Lambda)(g_1, ... , g_n) &= e\qquad \text{and, for all $\alpha \in P_n(G, A)$,} \\
i(a \alpha)(g_1, ... , g_n) &= a i(\alpha)(g_1, ... , g_n), \\
i(x_k \alpha)(g_1, ... , g_n) &= g_k i(\alpha)(g_1, ... , g_n), \\
i(x_k^{-1} \alpha)(g_1, ... , g_n) &= g_k^{-1} i(\alpha)(g_1, ... , g_n).
\end{align*}
Here and thereafter $\Lambda$ stands for the empty word and $e$ stands for the identity of $G$.
\end{defi}

\begin{defi}
Suppose that $G$ is a finitely generated group and $A$ is a finite symmetric generating set for $G$. The {\em  word problem in $(G, A)$} is the formal language $Id(G, A) = \{\alpha \in P_0(G, A) \mid i(\alpha) = e\}$.
\end{defi}

 \begin{thm}{{\em \cite{anisimov}}}\label{thm:anisimov}
 $Id(G, A)$ is regular if and only if $G$ is finite.
 \end{thm}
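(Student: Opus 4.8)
The plan is to prove both implications directly, organised around the evaluation homomorphism $\pi \colon A^* \to G$ which sends a word $a_1 \cdots a_k$ to the product $a_1 \cdots a_k$ in $G$ (equivalently, $\pi$ is the unique monoid homomorphism extending the inclusion $A \hookrightarrow G$). Unwinding the recursion defining $i$ on $P_0(G,A) = A^*$ one checks that $i(\alpha) = \pi(\alpha)$ for every word $\alpha$, so $Id(G,A) = \pi^{-1}(e)$. Since $A$ generates $G$, the map $\pi$ is surjective; this is the feature of the hypothesis that will make the converse work.

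For the easy direction ($G$ finite $\Rightarrow$ $Id(G,A)$ regular), I would exhibit a deterministic finite automaton recognising $Id(G,A)$: take the state set to be $G$, the initial and unique accepting state to be $e$, and for $g \in G$ and $a \in A$ let the transition on $a$ from $g$ be $g \mapsto ga$. An induction on word length shows that reading $\alpha \in A^*$ from the initial state ends in state $\pi(\alpha)$, so the automaton accepts exactly $\pi^{-1}(e) = Id(G,A)$. As $G$ is finite, this automaton is finite, hence $Id(G,A)$ is regular.

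For the converse, suppose $Id(G,A)$ is regular and invoke the Myhill--Nerode theorem. The key claim is that, for $u,v \in A^*$, one has $u \sim v$ in the Myhill--Nerode (right-congruence) sense for $Id(G,A)$ if and only if $\pi(u) = \pi(v)$. The implication $(\Leftarrow)$ is immediate: if $\pi(u)=\pi(v)$ then $\pi(uw) = \pi(u)\pi(w) = \pi(v)\pi(w) = \pi(vw)$ for every $w$, so $uw \in Id(G,A) \iff vw \in Id(G,A)$. For $(\Rightarrow)$ I would use that $A$ is symmetric: given $u = a_1\cdots a_k$, the word $w := a_k^{-1}\cdots a_1^{-1}$ lies in $A^*$ and satisfies $\pi(uw) = \pi(u)\pi(u)^{-1} = e$, i.e. $uw \in Id(G,A)$; if $u \sim v$ then $vw \in Id(G,A)$, whence $\pi(v) = \pi(w)^{-1} = \pi(u)$. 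Thus the Myhill--Nerode classes of $Id(G,A)$ are exactly the fibres of $\pi$, and since $\pi$ is surjective there are exactly $|G|$ of them. Regularity forces this number to be finite, so $G$ is finite.

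The only delicate point — and the one place the symmetry of $A$ is genuinely used — is the $(\Rightarrow)$ half of the key claim, namely producing for each word a "test suffix'' that detects its value in $G$; everything else is a routine induction or a direct appeal to Myhill--Nerode. Beyond keeping the left/right multiplication conventions in the definition of $i$ consistent with the direction of the automaton's transitions, I anticipate no real obstacle.
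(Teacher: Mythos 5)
Your proof is correct. Note that the paper does not actually prove this statement --- it is quoted as a known result of Anisimov with a citation --- so there is no in-paper argument to compare against; your write-up is the standard proof, with the easy direction being essentially the $n=0$ case of the finite automaton the paper builds later for the regular Diophantine problem (state set $G$, transitions by right multiplication), and the converse correctly handled via Myhill--Nerode, using the symmetry of $A$ both to get surjectivity of $\pi$ onto $G$ and to produce the inverse word as the distinguishing suffix.
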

 
  \begin{thm}{{\em \cite{shupp}}}\label{thm:schupp}
$Id(G, A)$ is context-free if and only if $G$ is virtually free.
 \end{thm}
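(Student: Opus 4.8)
The plan is to prove the two implications separately; the ``if'' direction reduces to closure properties of context-free languages, while the ``only if'' direction is a genuinely geometric argument.

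For the ``if'' direction I would first record two reductions. First, context-freeness of $Id(G,A)$ does not depend on the chosen finite symmetric generating set: if $B$ is another one, fix a monoid homomorphism $\phi\colon A^*\to B^*$ with $\phi(a)$ representing $a$ in $G$ for every $a\in A$; then $\alpha$ represents $e$ in $G$ iff $\phi(\alpha)$ does, so $Id(G,A)=\phi^{-1}(Id(G,B))$, and context-free languages are closed under inverse homomorphism. Second, if $H\le G$ has finite index then $Id(G,A)$ is context-free iff the word problem of $H$ is; this uses closure of context-free languages under homomorphism, inverse homomorphism, intersection with regular languages and finite unions, together with a finite coset-tracking automaton. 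Finally, a free group of finite rank has context-free word problem, since a (deterministic) pushdown automaton recognizes it by keeping the freely reduced form of the prefix read so far on its stack, pushing a letter or cancelling it against the stack top, and accepting exactly when the stack empties. Putting these together: a virtually free $G$ has a finite-index free subgroup, whose word problem is context-free, hence so is $Id(G,A)$.

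For the ``only if'' direction assume $Id(G,A)$ is context-free; if $G$ is finite there is nothing to prove, so assume $G$ infinite. The crucial step is to transport the pumping behaviour of the language into the geometry of the Cayley graph $\Gamma=\Gamma(G,A)$: using the pumping lemma (or Ogden's lemma) one shows that $\Gamma$ is a \emph{context-free graph} in the sense of Muller and Schupp, i.e. there is a constant $k$ such that, after deleting the ball of radius $n$ about $e$, each connected component has at most $k$ boundary vertices, and there are only finitely many isomorphism types of such ``end cones'' as $n$ varies. In particular $\Gamma$ has more than one end, so Stallings' structure theorem gives a splitting of $G$ as an amalgamated product or HNN extension over a finite subgroup, with finitely generated vertex groups. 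One then verifies that the vertex groups again have context-free word problem while a well-founded complexity parameter (for instance the number of end-cone types) strictly decreases; iterating the splitting therefore terminates --- either by this internal complexity argument, or by noting that $G$ is finitely presented (itself a consequence of context-freeness of the word problem) and invoking Dunwoody's accessibility theorem. The process stops at a finite graph of groups with finite vertex and edge groups, and by the Karrass--Pietrowski--Solitar theorem the fundamental group of such a graph of groups is virtually free, so $G$ is virtually free.

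The main obstacle is the geometric core of the ``only if'' direction: extracting from the purely combinatorial pumping property of $Id(G,A)$ the structural statement that the ends of $\Gamma$ have uniformly bounded boundary and come in finitely many shapes, and then controlling the iteration of Stallings splittings. This last point is exactly where accessibility enters, and it rests on the non-formal fact that a context-free word problem forces finite presentability. Organising the induction so that the vertex groups provably remain context-free and a legitimate complexity measure decreases at each stage is the delicate bookkeeping on which the whole argument turns.
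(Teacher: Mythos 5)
This statement is the Muller--Schupp theorem; the paper does not prove it but cites it from \cite{shupp} as a known classification result, so there is no internal proof to compare against. Your sketch is essentially the standard published argument: the ``if'' direction via closure of context-free languages under inverse homomorphism, the finite-index reduction, and the pushdown automaton holding the freely reduced prefix on its stack; the ``only if'' direction via the end-cone (context-free graph) structure, Stallings' theorem, iterated splittings controlled by accessibility, and Karrass--Pietrowski--Solitar. Two points deserve flagging as genuine work rather than routine steps. First, the assertion that an infinite group with context-free word problem has more than one end is not automatic from the cone structure; it is a substantive lemma of Muller and Schupp (a one-ended group would force cones whose boundaries cannot stay uniformly bounded under the triangulation property extracted from the grammar), and your sketch passes over it. Second, the termination of the splitting process is exactly accessibility: your proposed ``internal complexity parameter'' is not obviously well defined, and the clean route is the one you mention in passing --- context-free word problem implies finite presentability (via the bounded triangulation of closed paths in the Cayley graph), and then Dunwoody's accessibility theorem applies. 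With those two steps filled in, your outline is the correct and standard proof; as written it is an accurate roadmap rather than a complete argument, which is an appropriate level of detail for a result the paper itself uses as a black box.
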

 
  \begin{thm}{{\em \cite{higman}}}\label{thm:higman}
$Id(G, A)$ is recursively enumerable if and only if $G$ is a subgroup of a finitely presented group.
 \end{thm}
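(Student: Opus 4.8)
The plan is to split the equivalence into one elementary implication and Higman's embedding theorem. For the easy direction, suppose $G$ embeds into a finitely presented group $H = \langle b_1, \dots, b_m \mid s_1, \dots, s_k \rangle$. Fix for each $a \in A$ a word $w_a$ over $\{b_1^{\pm 1}, \dots, b_m^{\pm 1}\}$ that represents $a$ in $H$, and let $\alpha \mapsto \widehat{\alpha}$ be the induced substitution $P_0(G, A) \to \{b_1^{\pm 1}, \dots, b_m^{\pm 1}\}^*$. Then $i(\alpha) = e$ in $G$ exactly when $\widehat{\alpha}$ represents $e$ in $H$; and the word problem of a finitely presented group is recursively enumerable, because $\widehat{\alpha} =_H e$ precisely when $\widehat{\alpha}$ is freely equal to some finite product $\prod_j u_j s_{\ell_j}^{\epsilon_j} u_j^{-1}$, and one may enumerate all such products, freely reduce, and compare. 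Pulling this enumeration back along the computable map $\alpha \mapsto \widehat{\alpha}$ shows that $Id(G, A)$ is recursively enumerable.

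For the converse I would first note that $Id(G, A)$ being recursively enumerable is equivalent to $G$ being recursively presented: the normal closure of a recursively enumerable set $R$ of words over $A$ is itself recursively enumerable, since one can enumerate all conjugates $u r^{\pm 1} u^{-1}$ with $r \in R$ together with their freely reduced products, so $\langle A \mid Id(G, A) \rangle$ is already a recursive presentation of $G$; the reverse implication is immediate. It therefore suffices to embed a finitely generated recursively presented group $G = \langle A \mid R \rangle$, with $R$ recursively enumerable, into a finitely presented group, which is Higman's theorem.

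The proof of Higman's theorem runs through the notion of a benign subgroup. Call a subgroup $H$ of a finitely generated free group $F$ \emph{benign} if there is an embedding of $F$ into a finitely presented group $L$ together with a finitely generated subgroup $M \le L$ such that $H = M \cap F$. One first establishes closure properties of the class of benign subgroups of $F$ — stability under finite intersections and joins, and under images and preimages along suitable coordinate homomorphisms between free groups — all of which reduce ultimately to the Neumann--Boone fact that an HNN extension, or an amalgamated free product, of finitely presented groups along finitely generated associated subgroups is again finitely presented. The heart of the argument, Higman's Lemma, is that if $R$ is a recursively enumerable set of words in $F$ then the subgroup $\langle R \rangle^F$ generated by all conjugates of elements of $R$ is benign in $F$. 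Granting this, one embeds $G = F / \langle R \rangle^F$ into a finitely presented group by a final HNN-type construction that exploits the finitely generated witness $M$ for benignity of the relator subgroup to factor it out while remaining finitely presented.

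I expect the main obstacle to be Higman's Lemma itself: one must encode the step-by-step operation of a machine enumerating $R$ as a finite tower of HNN extensions over free groups, arranged so that the subgroup in which the enumerated words accumulate remains finitely generated at every stage. By comparison, the easy direction, the equivalence between recursive enumerability of $Id(G, A)$ and recursive presentability of $G$, and the bookkeeping with the closure properties of benign subgroups are all comparatively routine once that encoding is in place.
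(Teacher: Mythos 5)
The paper does not actually prove this statement --- it is quoted as Higman's embedding theorem with a citation to \cite{higman} --- and your outline is precisely the standard proof from that source: the elementary direction by enumerating products of conjugates of the defining relators, the observation that recursive enumerability of $Id(G, A)$ is equivalent to $G$ being recursively presented, and then Higman's embedding theorem carried out via benign subgroups and their closure properties under HNN extensions and amalgams. The only caveat is that your sketch explicitly defers the technical core (Higman's Lemma, i.e.\ the encoding of a recursive enumeration as a finite tower of HNN extensions keeping the accumulating subgroup finitely generated), which is essentially the entire content of the cited reference, so what you have is a correct and faithful roadmap of the standard argument rather than a self-contained proof.
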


\section{Diophantine problem}

\begin{defi}
We call $(g_1, ..., g_n) \in G^n$ {\em a solution of the $n$-ary polynomial $\alpha \in P_n(G, A)$ }  if $i(\alpha)(g_1, ... , g_n) = e$.
\end{defi}

\begin{defi}
The {\em $n$-ary Diophantine problem over $(G, A)$} is the language $Eq_n(G, A)$ of all polynomials from  $P_n(G, A)$ that have a solution.
\end{defi}

It is not hard to see that $Eq_0(G, A) = Id(G, A)$. From this, we can derive the following necessary and sufficient conditions for its regularity and recursive enumerability.

\begin{thm}
Suppose that $G$ is a finitely generated group and $A$ is a finite symmetric generating set for $G$. Then the following statements are equivalent:
\begin{itemize}
\item $Eq_n(G, A)$ is regular for some $n \geq 1$;
\item $Eq_n(G, A)$ is regular for all $n \geq 1$;
\item $G$ is finite.
\end{itemize}
\end{thm}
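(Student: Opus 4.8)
The plan is to prove the cycle of implications, with the only nontrivial direction being: if $Eq_n(G,A)$ is regular for some $n \geq 1$, then $G$ is finite. The implication ``$G$ finite $\Rightarrow$ $Eq_n(G,A)$ regular for all $n$'' is routine: when $G$ is finite, a finite automaton can track the value in $G$ of the prefix read so far under each of the finitely many assignments $(g_1,\dots,g_n) \in G^n$ simultaneously (the state set is $G^{G^n}$, a finite set), accepting exactly when some coordinate of the state equals $e$. And ``regular for all $n \Rightarrow$ regular for some $n$'' is trivial. So the whole content is the first implication.

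\medskip

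For that implication I would argue contrapositively: assume $G$ is infinite and show $Eq_n(G,A)$ is not regular for any $n \geq 1$. The key observation is that $Eq_n(G,A)$ ought to ``contain'' a copy of $Id(G,A) = Eq_0(G,A)$, which is non-regular by Theorem~\ref{thm:anisimov}, via a rational (or at least regularity-preserving) reduction. Concretely, given a $0$-ary polynomial $\alpha \in P_0(G,A) \subseteq A^*$, I would like to attach it to a fixed $n$-ary polynomial so that the resulting $n$-ary polynomial has a solution if and only if $i(\alpha) = e$. The naive idea ``$\alpha$ itself, viewed in $P_n$, has a solution iff $i(\alpha)=e$'' works immediately because an $n$-ary polynomial using none of the variables $x_k$ evaluates to the constant $i(\alpha) \in G$ regardless of the assignment, so it has a solution iff $i(\alpha) = e$. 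Thus $Eq_n(G,A) \cap A^* = Id(G,A)$. Since $A^*$ is regular and the class of regular languages is closed under intersection, if $Eq_n(G,A)$ were regular then $Id(G,A)$ would be regular, forcing $G$ to be finite by Theorem~\ref{thm:anisimov} --- contradiction.

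\medskip

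So the main obstacle is essentially nonexistent here; the proof is a short closure argument plus Anisimov's theorem, and the only thing to be careful about is the edge case $n \geq 1$ versus $n = 0$ (the statement explicitly restricts to $n \geq 1$, but the intersection trick handles all $n \geq 0$ uniformly). One should double-check the definitions to confirm that a polynomial in $P_n(G,A)$ that happens to lie in $A^*$ is interpreted exactly as the corresponding element of $P_0(G,A)$ --- this is immediate from the recursive definition of $i$, since the clauses for $x_k$ and $x_k^{-1}$ simply never fire. I would also remark that the same argument, combined with Theorems~\ref{thm:schupp} and~\ref{thm:higman}, reduces the context-free and recursively-enumerable cases to showing the relevant classes are closed under intersection with $A^*$ (true in both cases) --- but for context-freeness this only yields that $Eq_n$ context-free implies $G$ virtually free, which is weaker than the desired conclusion that $G$ is finite, explaining why Sections~4--6 need additional work for Theorem~\ref{thm:main-one}.
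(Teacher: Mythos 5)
Your proposal is correct and follows essentially the same route as the paper: the intersection $Eq_n(G,A)\cap A^* = Id(G,A)$ together with Anisimov's theorem for the hard direction, and the DFA with state set $G^{G^n}$ tracking all assignments simultaneously for the converse. No substantive difference.
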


\begin{proof}   
If $Eq_n(G, A)$ is regular for all $n \geq 1$, then $Eq_n(G, A)$ is regular for some $n \geq 1$.

If $Eq_n(G, A)$ is regular for some $n \geq 1$, then $Eq_n(G, A) \cap A^* = Id(G, A)$ is also regular. Thus $G$ is finite by Theorem~\ref{thm:anisimov}.

Suppose that $G$ is finite. Then to conclude the proof we build a deterministic finite automaton that recognizes $Eq_n(G, A)$ for any $n$. In our deterministic finite automaton, the set of states is the set of all functions from $G^n$ to $G$ (their number is equal to $|G|^{n|G|}$). The transition function $\psi\colon G^{G^n} \times P_n(G, A) \to G^{G^n}$ is defined by the formula
 
 \begin{equation}\psi(f, a)(g_1, ... , g_n) = \begin{cases} f(g)a \quad \text{if } a \in A \\ f(g)g_j \quad \text{if } a = x_j \\ f(g)g_j^{-1} \quad\text{if } a = x_j^{-1} \end{cases}\end{equation}
for all $f \in G^{G^n}$, $g \in G$, $\alpha \in P_n(G, A)$ and $j \leq n$.

 The initial state is the constant function on $G^n$ with value $e$. The set of terminal states is $\{\alpha \in P_n(G, A) \mid \exists g_1, ... , g_n \in G \text{ such that } i(\alpha)(g_1, ... , g_n) = e \}$.
\end{proof}

\begin{thm}
Suppose that $G$ is a finitely generated group and $A$ is a finite symmetric generating set for $G$. Then the following statements are equivalent:
\begin{itemize}
\item $Eq_n(G, A)$ is recursively enumerable for some $n \geq 1$;
\item $Eq_n(G, A)$ is recursively enumerable for all $n \geq 1$;
\item $G$ is a subgroup of a finitely presented group.
\end{itemize}
\end{thm}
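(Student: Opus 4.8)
The plan is to establish a cycle of implications, mirroring the structure of the preceding theorem. The implication ``$Eq_n(G,A)$ recursively enumerable for all $n \geq 1$'' $\Rightarrow$ ``$Eq_n(G,A)$ recursively enumerable for some $n \geq 1$'' is immediate. For the implication ``$Eq_n(G,A)$ recursively enumerable for some $n \geq 1$'' $\Rightarrow$ ``$G$ is a subgroup of a finitely presented group'', I would observe, just as in the regular case, that a polynomial lying in $P_n(G,A) \cap A^* = A^*$ has no variables, so its interpretation is independent of the chosen tuple, and hence it has a solution if and only if it represents $e$. Thus $Eq_n(G,A) \cap A^* = Id(G,A)$, and since $A^*$ is decidable, intersecting with it preserves recursive enumerability; therefore $Id(G,A)$ is recursively enumerable, and Theorem~\ref{thm:higman} finishes this direction.

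The remaining implication, ``$G$ is a subgroup of a finitely presented group'' $\Rightarrow$ ``$Eq_n(G,A)$ is recursively enumerable for all $n \geq 1$'', is the one requiring an actual construction. First, Theorem~\ref{thm:higman} gives that $Id(G,A)$ is recursively enumerable; fix an algorithm enumerating it. The key point is that, since $A$ generates $G$, every tuple $(g_1, \dots, g_n) \in G^n$ is realised by some tuple of words $(w_1, \dots, w_n) \in (A^*)^n$, in the sense that $i(w_k) = g_k$. Hence $\alpha \in P_n(G,A)$ has a solution if and only if there is a tuple $(w_1,\dots,w_n) \in (A^*)^n$ such that the word $\mathrm{sub}(\alpha; w_1,\dots,w_n) \in A^*$ obtained from $\alpha$ by replacing each occurrence of $x_k$ by $w_k$ and each occurrence of $x_k^{-1}$ by the formal inverse of $w_k$ (still a word over $A$, since $A$ is symmetric) lies in $Id(G,A)$. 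A routine induction on the length of $\alpha$, using the recursive definition of $i$ together with the fact that $i$ applied to the formal inverse of $w$ equals $(i(w))^{-1}$, verifies this equivalence.

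To conclude, I would note that $\mathrm{sub}$ is a computable function, so the set $\{(\alpha, w_1, \dots, w_n) : \mathrm{sub}(\alpha; w_1,\dots,w_n) \in Id(G,A)\}$ is recursively enumerable (dovetail the enumeration of $Id(G,A)$ against an enumeration of all its inputs), and $Eq_n(G,A)$ is its projection onto the first coordinate, hence recursively enumerable as well; equivalently, on input $\alpha$ one runs, in parallel over all tuples $(w_1,\dots,w_n) \in (A^*)^n$ and all stages of the $Id(G,A)$-enumeration, a search that halts and accepts as soon as some $\mathrm{sub}(\alpha; w_1,\dots,w_n)$ is found in $Id(G,A)$. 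I do not expect a serious obstacle here; the only slightly delicate points are the bookkeeping in the substitution lemma and arranging the dovetailing so that every tuple of words is eventually tried, both of which are standard.
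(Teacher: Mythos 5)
Your proposal is correct and follows essentially the same route as the paper: intersecting with $A^*$ to recover $Id(G,A)$ and invoke Theorem~\ref{thm:higman} in one direction, and a dovetailed search over all tuples of words substituted for the variables, run against the $Id(G,A)$ recognizer, in the other. Your extra care about substituting the formal inverse of $w_k$ for $x_k^{-1}$ is a detail the paper leaves implicit, but it does not change the argument.
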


\begin{proof} 
If $Eq_n(G, A)$ is recursively enumerable for all $n \geq 1$, then $Eq_n(G, A)$ is recursively enumerable for some $n \geq 1$.
If $Eq_n(G, A)$ is recursively enumerable for $n \geq 1$, then $Eq_n(G, A) \cap A^* = Id(G, A)$ is also recursively enumerable. That means $G$  is a subgroup of a finitely presented group by Theorem~\ref{thm:higman}.
Suppose that $G$ is a subgroup of a finitely presented group. That means its word problem is recursively enumerable by Theorem 4. Then the words from $Eq_n(G, A)$ are recognized by the following algorithm:

Suppose that $w$ is our word. 
On the $m$-th step, for each tuple of words $(v_1, ... v_n)  \in A^*$ of length $m$ or less, we run the first $m$ steps of the $Id(G, A)$ recognition algorithm for 
$$w[x_1:=v_1, ... , x_n := v_n].$$
If the $Id(G, A)$ recognition algorithm terminates within $m$ steps for at least one of the tuples, then $w \in Eq_n(G, A)$. Otherwise we launch step $m+1$. \end{proof}

To find the necessary and sufficient conditions for  $Eq_n(G, A)$ being context-free we first need to pay attention to certain facts that we mention in the next two Sections.

\section{Context-free sets of rational numbers}

\begin{defi}
A set of positive rational numbers $Q$ is {\em  context-free} if the language $\{a^{m}b^n \; | \; \frac{m}{n} \in Q\}$ is context-free.
\end{defi}

\begin{lem}{{\em \cite{barhillel}.}}\label{thm:pumping-lemma}
If a language $L$ is context-free, then there exists an integer $p \geq 1$, called the pumping length, such that any word $s \in L$ of length at least $p$ can be written as $s=uvwxy$, where $|vx|\geq 1$, $|vwx|\leq p$ and $\forall n \in \mathbb{N}$ $uv^{n}wx^{n}y\in L$.
\end{lem}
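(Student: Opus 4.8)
The plan is to prove this classical \emph{uvwxy theorem} by the standard pumping argument on derivation trees of a grammar in Chomsky normal form.

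First I would dispose of degenerate cases: if $L$ is finite, take $p$ strictly larger than the length of the longest word of $L$, and the conclusion is vacuous. Otherwise, fix a context-free grammar for $L$ and replace it by an equivalent grammar $\Gamma$ in Chomsky normal form generating $L \setminus \{\Lambda\}$; this changes nothing, since we will take $p \ge 2$ and so every $s$ with $|s| \ge p$ is nonempty. Let $b$ be the number of nonterminals of $\Gamma$ and put $p = \max(2^{b}, 2)$. The structural fact I would isolate is: in a derivation tree of $\Gamma$ every nonterminal node either has two nonterminal children (rule $X \to YZ$) or a single terminal leaf child (rule $X \to a$), so if every root-to-leaf path of such a tree meets at most $b$ nonterminal nodes, then the tree has at most $2^{b-1} < p$ leaves. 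Hence any $s \in L$ with $|s| \ge p$ admits a derivation tree containing a root-to-leaf path through at least $b+1$ nonterminal nodes.

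Next I would take such a derivation tree for $s$ with the fewest nodes, choose in it a root-to-leaf path maximizing the number of nonterminal nodes it meets, and look at the bottom $b+1$ nonterminal nodes on that path. By pigeonhole, two of them, say $N_{1}$ (the higher) and $N_{2}$ (a proper descendant of $N_{1}$), carry the same nonterminal $R$. Let $w$ be the terminal yield of the subtree rooted at $N_{2}$ and $vwx$ the yield of the subtree rooted at $N_{1}$; writing $s = uvwxy$ accordingly gives derivations $S \Rightarrow^{*} uRy$, $R \Rightarrow^{*} vRx$ and $R \Rightarrow^{*} w$. Iterating the middle derivation and then applying the last one yields $S \Rightarrow^{*} u v^{n} w x^{n} y$ for every $n \ge 0$, so all of these words lie in $L$. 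The bound $|vwx| \le p$ holds because $N_{1}$ is among the bottom $b+1$ nonterminal nodes of a path with the maximal number of nonterminal nodes, so every root-to-leaf path inside the subtree at $N_{1}$ meets at most $b+1$ nonterminal nodes, hence that subtree has at most $2^{b} \le p$ leaves. Finally $|vx| \ge 1$: since $N_{1}$ has the nonterminal descendant $N_{2}$ it applies a binary rule, so besides the child lying on the path to $N_{2}$ it has a second child whose terminal yield (nonempty, as Chomsky normal form has no $\Lambda$-rules) is a part of $v$ or of $x$; alternatively, if $v = x = \Lambda$ one could splice the strictly smaller subtree at $N_{2}$ in place of the subtree at $N_{1}$ and contradict minimality of the tree.

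There is no genuine obstacle here; the only care needed is bookkeeping — fixing $p$ so that ``long word'' forces ``long path'', and picking the repeated nonterminal low enough to bound $|vwx|$ while keeping $N_{1}$ strictly above $N_{2}$ to guarantee $|vx| \ge 1$ — so the step most worth spelling out is the tree surgery that produces simultaneously the three derivations $S \Rightarrow^{*} uRy$, $R \Rightarrow^{*} vRx$, $R \Rightarrow^{*} w$ together with the two length inequalities. Since the statement is only being used downstream as a black box (and is cited from \cite{barhillel}), I would in fact keep the write-up to a brief sketch along these lines.
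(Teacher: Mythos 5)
Your proof is correct: it is the standard Bar--Hillel--Perles--Shamir argument via a Chomsky-normal-form grammar and a repeated nonterminal on a longest path of a derivation tree, with the usual bookkeeping ($p=2^{b}$, maximality of the path giving $|vwx|\le p$, absence of $\Lambda$-rules giving $|vx|\ge 1$) all handled properly. The paper itself supplies no proof of this lemma --- it is cited directly from \cite{barhillel} and used as a black box --- so there is nothing to compare beyond noting that your sketch matches the classical proof of the cited result.
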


\begin{lem}\label{thm:main-lemma}
Any context-free set of rational numbers has finitely many isolated points.
\end{lem}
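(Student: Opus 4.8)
The plan is to derive a contradiction from the assumption that some context-free set of rationals $Q$ has infinitely many isolated points. First I would apply the pumping lemma (Lemma~\ref{thm:pumping-lemma}) to the language $L = \{a^m b^n \mid m/n \in Q\}$, obtaining a pumping length $p$. The key observation is that a word $a^m b^n \in L$ with $m,n$ both large cannot be pumped entirely within the $a$-block or entirely within the $b$-block in a way that stays in $L$, because pumping $a^m b^n \mapsto a^{m+kt} b^n$ (for a fixed increment $t = |v| \geq 1$ and all $k \geq 0$) would force $\frac{m+kt}{n} \in Q$ for all $k$, exhibiting an infinite monotone sequence of points of $Q$; such a sequence of points, together with analogous reasoning, shows these points are not isolated — more precisely I will need to show that the relevant points of $Q$ obtained this way are limit points of $Q$, so they cannot be the isolated ones.

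The cleaner route is this: suppose $Q$ has infinitely many isolated points. Since $Q \subseteq \mathbb{Q}_{>0}$, an infinite set of isolated points must be unbounded (a bounded infinite set has a limit point, and that limit point witnesses that nearby points are not isolated — though one must be a little careful, since the limit point need not lie in $Q$; still, infinitely many of the isolated points lie in any bounded region only if they accumulate, contradicting isolation). Hence $Q$ contains arbitrarily large isolated points. Now pick an isolated point $r = m_0/n_0 \in Q$ with $r > p$ (so that in lowest terms, or just with $n_0 \geq 1$ and $m_0 > p \cdot n_0 \geq p$), realized by a word $s = a^{m_0} b^{n_0} \in L$ of length $\geq p$. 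Write $s = uvwxy$ per the pumping lemma with $|vwx| \leq p \leq m_0$; since $|vwx| \leq p < m_0$ and the $a$'s number $m_0$... hmm, actually I want to force $vwx$ to sit inside the $a$-block. That requires $n_0$ comparatively small, i.e. I should instead choose $r$ huge by taking $n_0 = 1$: pick an integer isolated point $q \in Q \cap \mathbb{Z}$, $q > p$ — but $Q$ need not contain integers.

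So here is the fix for the main obstacle, which is exactly this control of where $vwx$ falls. Given an isolated point $r = m/n \in Q$ with $m$ large, consider instead the word $s = a^{Nm} b^{Nn}$ for a large multiple $N$; it also lies in $L$. Choosing $N$ large makes the total length $\geq p$, but $|vwx| \leq p$ is fixed and small while the blocks are long. The subword $vwx$ either lies within the $a$-block, within the $b$-block, or straddles the boundary. If it straddles, $|vwx| \leq p$ forces $v \in a^+$, $x \in b^+$ (or one of them empty), and pumping gives $a^{Nm + k|v|} b^{Nn + k|x|} \in L$ for all $k$, hence $\frac{Nm + k|v|}{Nn + k|x|} \in Q$ for all $k \geq 0$; as $k \to \infty$ this sequence converges (to $|v|/|x|$ if $x \neq \Lambda$, or diverges to $+\infty$ if $x = \Lambda$), and in the convergent case it is eventually monotone and non-constant, so its limit is a limit point of $Q$ realized without being in the sequence — but more usefully, a non-constant monotone sequence inside $Q$ shows its members are non-isolated. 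The $a$-only and $b$-only cases similarly produce, after dividing out, a non-constant sequence $\frac{m + k\delta}{n}$ or $\frac{m}{n + k\delta}$ inside $Q$. In every case we produce an infinite non-constant monotone sequence of elements of $Q$; its terms (being each a limit of the others from one side — or: each term has other terms of the sequence arbitrarily close, since the sequence is Cauchy when convergent, or we pick a convergent subsequence when it diverges... no, a monotone divergent sequence in $\mathbb{Q}_{>0}$ has no two terms close) — the clean statement is: a convergent non-constant sequence in $Q$ has all but at most one of its terms being non-isolated points of $Q$. So if $Q$ had infinitely many isolated points we could, by the unboundedness argument combined with the above, still manufacture such a convergent sequence among arbitrarily chosen isolated points, the contradiction. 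The step I expect to be genuinely fiddly is precisely assembling the bookkeeping so that the pumped sequence is guaranteed convergent and non-constant (ruling out the "$v, x$ both nonempty with $|v|/|x|$ equal to the current ratio" degenerate case, which needs the freedom of choosing $N$ or choosing a different isolated point).
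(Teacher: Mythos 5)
You have the right skeleton---pump the word $a^{Nm}b^{Nn}$ for a large multiple $N$ of a chosen isolated point $m/n$, and split into three cases according to where $vwx$ falls---and this matches the paper's approach. But the way you extract the contradiction does not work. Your plan is to produce a non-constant convergent sequence $\frac{Nm+k|v|}{Nn+k|x|}$ inside $Q$ and invoke the claim that ``a convergent non-constant sequence in $Q$ has all but at most one of its terms being non-isolated points of $Q$.'' That claim is false: $Q=\{1+1/k \mid k\ge 1\}$ contains the convergent non-constant sequence $1+1/k$, yet every point of $Q$ is isolated (the limit $1$ is not in $Q$). Moreover, even if it were true, your sequence converges to $|v|/|x|$, not to the chosen point $m/n$; the terms with large $k$ cluster near $|v|/|x|$ and say nothing about whether $m/n$ itself is isolated. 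What the argument actually needs---and what the paper does---is a direct estimate showing that the \emph{single} pumped term (one extra repetition) lands within the isolation radius $\epsilon$ of the chosen point: one computes $\left|\frac{Nm+\alpha}{Nn+\beta}-\frac{m}{n}\right| = \frac{|\alpha n - \beta m|}{n(Nn+\beta)}$ with $\alpha,\beta\le p$ bounded, so this is $<\epsilon$ once $N$ is chosen large enough \emph{in terms of $\epsilon$}. You never tie the choice of $N$ to $\epsilon$; you use large $N$ only to locate $vwx$, and your early claim that an infinite set of isolated points must be unbounded is also false ($\{1/k\}$ is bounded with all points isolated), though you abandon that route.

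Second, the degenerate case $\alpha/\beta = m/n$, where the pumped term equals $m/n$ exactly and no contradiction arises, is the crux, and you explicitly leave it unresolved, hoping that ``choosing $N$ or choosing a different isolated point'' will help. Varying $N$ cannot help: the pumping lemma hands you the decomposition, and nothing prevents it from always satisfying $\alpha/\beta=m/n$ so long as $m/n$ in lowest terms has numerator plus denominator at most $p$. The fix is to use the hypothesis that there are \emph{infinitely many} isolated points: among them pick one whose reduced fraction $M/N$ satisfies $M+N>p$. Then $\alpha N=\beta M$ with $\gcd(M,N)=1$ forces $M\mid\alpha$ and $N\mid\beta$, hence $\alpha+\beta\ge M+N>p\ge|vxy|\ge\alpha+\beta$, a contradiction; so the pumped term is genuinely distinct from $M/N$ yet within $\epsilon$ of it, contradicting isolation. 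Without this choice of witness the argument does not close.
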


\begin{proof}
Suppose that $Q \subset \mathbb{Q}_+$ is a context-free set with infinitely many isolated points. Then $L = \{a^{m}b^n \; | \; \frac{m}{n} \in Q\}$ is context-free. Suppose that $p$ is its pumping length,  $\frac{M}{N} \in Q$ is an isolated element of $Q$ such that $\frac{M}{N}$ is irreducible and $M + N > p$ (such an element exists by the Pigeonhole Principle, because $Q$ is infinite and there are only finitely many positive rational numbers $\frac{M}{N}$, such that $M + N \leq p$),\begin{align}
\epsilon &:= \inf\left\{|q - \frac{M}{N}| \mid q \in Q\setminus \left\{ \frac{M}{N}\right\}\right\} > 0,\\
n &:= \left\lceil \frac{(M + N)^2}{\epsilon N^2} \right\rceil + 1,\\
w &:=a^{nM}b^{nN}.
\end{align}
Then by Lemma~\ref{thm:pumping-lemma} $w=uvxyz$, where $|vxy| \leq p$, $|vy| \geq 1$, and $uv^txy^tz\in L$  for all $t \in \mathbb{N}$. Suppose that $\alpha$ is the number of instances of $a$ in $vy$ and $\beta$ is the number of instances of $b$ in $vy$. If $v$ or $y$ contains both $a$ and $b$, then $uv^2xy^2z$ contains $ba$ and therefore does not belong to $L$. 
So $3$ variants are possible:
\begin{itemize}
\item If $\beta = 0$ then for all $t \in \mathbb{N}_0$ $a^{nM + (t - 1)\alpha}b^{nN} \in L$ by the Lemma~\ref{thm:pumping-lemma}. That means $\frac{nM + (t - 1)\alpha}{nN} \in Q$ for all $t \in \mathbb{N}_0$. Take $t = 2$. Then $\frac{nM + \alpha}{nN} \in Q$. But $|\frac{nM + \alpha}{nN} - \frac{M}{N}| = \frac{\alpha}{nN} \leq \frac{M + N}{nN} < \epsilon$, because  $\alpha \leq p \leq M+N$. Contradiction.
\item If $\alpha = 0$ then for all $t \in \mathbb{N}$ $a^{nM}b^{nN + (t - 1)\beta} \in L$ by the Lemma~\ref{thm:pumping-lemma}. That means $\forall t \in \mathbb{N}$ $\frac{nM}{nN + (t - 1)\beta} \in Q$ for all $t \in \mathbb{N}$. Take $t = 2$. Then $\frac{nM}{nN + \beta} \in Q$. But $|\frac{nM}{nN + \beta} - \frac{M}{N}| = \frac{(M+N)M}{((n+1)N + M)N} \leq \frac{M^2}{nN^3} < \epsilon$ because  $\alpha \leq p \leq M+N$. Contradiction.
\item If $v = a^\alpha$ and $y = b^\beta$ then $a^{nM+(t-1)\alpha}b^{nN+(t-1)\beta} \in L$ for all $t \in \mathbb{N}$ by the Lemma~\ref{thm:pumping-lemma}. That means $\frac{nM+(t-1)\alpha}{nN+(t-1)\beta} \in Q$  for all $t \in \mathbb{N}$. Take $t = 2$. Then $\frac{nM + \alpha}{nN + \beta} \in Q$.
But $|\frac{nM + \alpha}{nN + \beta} - \frac{M}{N}| = \frac{|\alpha N - \beta M|}{N(nN + \beta)} < \epsilon$. It is possible only when $\frac{\alpha}{\beta} = \frac{M}{N}$. And from that follows $p < M + N \leq \alpha + \beta = |vy| \leq |vxy| \leq p$. Contradiction.
\end{itemize}
These cases complete the proof that  $L$ is not context-free.
\end{proof}

\section{Commensuration of group elements}

\begin{defi}
Suppose $G$ is a group. Then $a, b \in G$ are {\em commensurate} if there exist $n, m \in \mathbb{Z}\setminus\{0\}$ such that $a^n = b^m$.
\end{defi}

It is not hard to see that commensuration is an equivalence relation. We denote by $CC(a)$ the commensuration class of $a$.

\begin{thm}{{\em \cite{gromov}.}}\label{thm:gromov}
Commensuration classes of hyperbolic groups are of the following types:
\begin{itemize}
\item set of all finite order elements;
\item set of all infinite order elements of a maximal virtually cyclic subgroup.
\end{itemize}
\end{thm}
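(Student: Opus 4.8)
The plan is to treat the torsion elements and the infinite-order elements separately, reducing the infinite-order case to the standard structure theory of hyperbolic groups. First I would observe that in any group all finite-order elements form a single commensuration class: if $a$ and $b$ have orders $d$ and $e$, then $a^{d} = e = b^{e}$ with $d, e \neq 0$, so $a$ and $b$ are commensurate; conversely, if $a^{n} = b^{m}$ with $n, m \neq 0$ and $a$ has finite order, then $a^{n}$ has finite order, hence so does $b^{m}$, hence so does $b$. Thus the torsion elements form one commensuration class, and it is disjoint from every class containing an element of infinite order.

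For the infinite-order case the key input is the well-known fact (going back to Gromov; see also Ghys--de la Harpe or Bridson--Haefliger) that an element $g$ of infinite order in a hyperbolic group $G$ acts on the Gromov boundary $\partial G$ as a hyperbolic isometry with exactly two fixed points $g^{+}, g^{-}$, that every nonzero power of $g$ has the same pair of fixed points, and that $E(g) := \mathrm{Stab}_G(\{g^{+}, g^{-}\})$ is the unique maximal virtually cyclic subgroup of $G$ containing $g$. Granting this, suppose $a$ and $b$ are commensurate infinite-order elements, say $a^{n} = b^{m} =: c \neq e$ with $n, m \neq 0$. Then $c$ has infinite order and $\{c^{+}, c^{-}\} = \{a^{+}, a^{-}\} = \{b^{+}, b^{-}\}$, so $a, b \in E(a)$ and in fact $E(a) = E(b)$. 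This shows that $CC(a)$ is contained in the set of infinite-order elements of the single maximal virtually cyclic subgroup $E(a)$.

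For the reverse inclusion I would argue purely group-theoretically inside $V = E(a)$: pick an infinite cyclic subgroup $\langle t\rangle \leq V$ of finite index. For any infinite-order $x \in V$, the subgroup $\langle x \rangle \cap \langle t \rangle$ has finite index in $\langle x \rangle \cong \mathbb{Z}$ and is therefore nontrivial, so $x^{p} = t^{r}$ for some $p, r \neq 0$. Applying this to $a$ and $b$ gives $a^{p} = t^{r}$ and $b^{q} = t^{s}$ with all exponents nonzero, whence $a^{ps} = t^{rs} = b^{qr}$ and $a$ is commensurate to $b$. Combining the two inclusions, $CC(a)$ is exactly the set of infinite-order elements of $E(a)$, which establishes the second bullet; and since every infinite-order element lies in a unique maximal virtually cyclic subgroup, the sets of infinite-order elements of these subgroups indeed partition the set of all infinite-order elements.

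The main obstacle is the cited input about the boundary dynamics of infinite-order elements — concretely, that such an element has positive translation length on the Cayley graph (which already uses hyperbolicity, since otherwise $\langle g \rangle$ would be bounded and hence finite), that its fixed-point set on $\partial G$ is a pair, and that the stabilizer of that pair is virtually cyclic and maximal among virtually cyclic subgroups. If one does not wish to quote this, it has to be extracted from the thin-triangles condition, and that is by far the most technical part of the argument; everything else is elementary manipulation of exponents.
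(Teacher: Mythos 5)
The paper does not actually prove this statement: it is quoted as a known result with a citation to Gromov, and is used as a black box in the proof of Theorem~\ref{thm:main-one}. Your proposal is a correct outline of the standard argument, so it supplies more than the paper does. The elementary bookkeeping is all right: torsion elements form a single class because any two satisfy $a^{d}=1=b^{e}$ with $d,e\neq 0$, and commensurability preserves having finite order; and inside a virtually cyclic group $V$ with finite-index $\langle t\rangle$ any two infinite-order elements are commensurate via nonzero powers landing in $\langle t\rangle$. The genuinely hyperbolic input you isolate --- that an infinite-order $g$ is loxodromic with fixed pair $\{g^{+},g^{-}\}$ shared by all its nonzero powers, and that $E(g)=\mathrm{Stab}_G(\{g^{+},g^{-}\})$ is the unique maximal virtually cyclic subgroup containing $g$ --- is exactly the content of the cited theorem, so delegating it is appropriate; this is consistent with how the paper treats the result. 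Two small points worth tightening: (i) the theorem as stated speaks of ``a maximal virtually cyclic subgroup,'' and a subgroup maximal among virtually cyclic ones could in principle be finite, in which case its set of infinite-order elements is empty and is not a commensuration class; your formulation via $E(g)$ for infinite-order $g$ quietly fixes this, and you should say explicitly that every maximal virtually cyclic subgroup containing an infinite-order element equals some $E(g)$ by maximality. (ii) You use the letter $e$ both for the identity and for the order of $b$ in the first paragraph; rename one of them.
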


\section{Proof of the Theorem~\ref{thm:main-one}}

\begin{proof}
If $Eq_n(G, A)$ is context-free for all $n \geq 1$, it is context-free for some $n \geq 1$.

If $G$ is finite then by Theorem~\ref{thm:schupp} $Eq_n(G, A)$ is context-free for all $n \geq 1$. 

If $Eq_n(G, A)$ is context-free for some $n \geq 1$, then $Eq_n(G, a) \cap P_1(G, A) = Eq_1(G, A)$ is also context-free.
Suppose now that the language $Eq_1(G, A)$ is context-free and $G$ is infinite. Then the language $Id(G, A) = Eq_1(G, A) \cap A^*$ is also context-free as intersection of a context-free language with a regular one. That means $G$ is virtually-free (and therefore hyperbolic) by the Theorem~\ref{thm:gromov}. Suppose that $a$ is an infinite-order element of $G$ and suppose that $H := \langle CC_G(a) \rangle$. Then $H$ is an virtually infinite cyclic group. Suppose that $c$ is a generator of a finite-index cyclic normal subgroup of $H$. Now $w \in P_0(G, A)$, $i(w) = c$. Consider the language $Eq_1(G, A) \cap \{w\}^*\{x\}^*$. It is context-free as it is the intersection of a context-free language with a regular one. However $Eq(G, A) \cap \{w\}^*\{x\}^* = \{w^nx^m \; | \; \exists b \in H \; b^mc^n = e\} = \{w^nx^m \; | \; \frac{n}{m} \in \{\frac{n}{k} \in \mathbb{Q} \; | \; \exists b \in H \; b^k = c^n\})\}$. But $\{\frac{n}{k} \in \mathbb{Q} \; | \; \exists b \in H \; b^k = c^n\}$ is discrete as $\langle c \rangle$ has finite index in $H$. This contradicts Lemma~\ref{thm:main-lemma}.
\end{proof}

\section*{Acknowledgements}

I thank my supervisor Anton Klyachko for providing valuable advice regarding this research.

\bibliographystyle{plain}
\bibliography{bibliography}

\typeout{get arXiv to do 4 passes: Label(s) may have changed. Rerun}
\end{document}